\newcommand{\vertiii}[1]{{\left\vert\kern-0.25ex\left\vert\kern-0.25ex\left\vert #1 
		\right\vert\kern-0.25ex\right\vert\kern-0.25ex\right\vert}}
  \DeclareMathOperator{\cb}{cb}
  \renewcommand{\cb}{{\mathrm{cb}}}
\DeclareMathOperator{\fa}{{\mathrm{fact}}}
  \newcommand{\norm}[1]{\|#1\|}
  \newcommand{\Id}{\ensuremath{\mathop{\rm Id}\nolimits}}
  \newcommand{\poly}{\mbox{\rm poly}}
  \DeclareMathOperator{\Diag}{Diag}
  \newcommand{\ind}[1]{\mathbf{#1}}
  \newcommand{\beq}{\begin{equation}}
  \newcommand{\eeq}{\end{equation}}
  \newcommand{\beqn}{\begin{equation*}}
  \newcommand{\eeqn}{\end{equation*}}
  \newcommand{\beqr}{\begin{eqnarray}}
  \newcommand{\eeqr}{\end{eqnarray}}
  \newcommand{\beqrn}{\begin{eqnarray*}}
  \newcommand{\eeqrn}{\end{eqnarray*}}
  \newcommand{\bmline}{\begin{multline}}
  \newcommand{\emline}{\end{multline}}
  \newcommand{\bmlinen}{\begin{multline*}}
  \newcommand{\emlinen}{\end{multline*}}
  \theoremstyle{plain}
  \newtheorem{theorem}{Theorem}[section]
  \theoremstyle{definition}
  \theoremstyle{remark}
  \newtheorem{remark}[theorem]{Remark}
  \renewenvironment{proof}[1][]{
    	\begin{trivlist}
     	\item[\hspace{\labelsep}{\em\noindent Proof#1:\/}]}
     	{{\hfill$\Box$}
    	\end{trivlist}
  }
\begin{document}

\title[Understanding CS factorization through SDP]{Understanding Christensen-Sinclair factorization via semidefinite programming
}

\author[F. Escudero-Guti\'errez]{Francisco Escudero-Guti\'errez}
\address{CWI \& QuSoft, Science Park 123, 1098 XG Amsterdam, The Netherlands}
\email{feg@cwi.nl}

\thanks{
This research was supported by the European Union’s Horizon 2020 research and innovation programme under the Marie Sk{\l}odowska-Curie grant agreement no. 945045, and by the NWO Gravitation project NETWORKS under grant no. 024.002.003.}

\begin{abstract}
    We show that the Christensen-Sinclair factorization theorem, when the underlying Hilbert spaces are finite dimensional, is an instance of strong duality of semidefinite programming. This gives an elementary proof of the result and also provides an efficient  algorithm to compute the Christensen-Sinclair factorization. 
\end{abstract}
    
\maketitle

\section{Introduction}
The seminal result in operator space theory of Christensen and Sinclair establishes that a 
$t$-linear form $T:(M_n)^t\to \mathbb C$  is completely contractive if and only if it can be factored into a sequence of $*$-representations interlaced with contractions~\cite{christensen1987representations}. 
Here, $M_n$ is the space of complex $n\times n$ matrices equipped with the operator norm of matrices when they are regarded as linear maps from $\ell_2^n(\mathbb C)$ 
to $\ell_2^n(\mathbb C).$
The completely bounded norm of $T$ is defined in the following way.  For every $m\in \mathbb N$, consider the map $T_m: (M_{nm})^t\to M_m$ defined via $$T_m(X_1,\dots,X_t)=\left(\sum_{r_1,\dots,r_{t-1}\in [m]}T((X_1)_{i,r_1},(X_2)_{r_1,r_2},\dots (X_t)_{r_{t-1},j})\right)_{i,j\in [m]},$$
where $(X_1)_{ij}$ is the $(i,j)$-th block of $X_1$ when $X_1$ is regarded as a matrix with $n\times n$ blocks of size $m\times m$. Then, the completely bounded norm of $T$ is given by $$\norm{T}_{\cb}=\sup\{\norm{T_m}: m\in\mathbb N\},$$
where $\norm{T_m}$ is the operator norm of $T_m$.
\begin{theorem}\label{theo:CS}(Christensen-Sinclair)
    Let $T:(M_n)^t\to \mathbb C$ be a $t$-linear form. Then, $\norm{T}_{\cb}\leq 1$ if and only if there exist $d\in\mathbb N$, unit vectors $u,v\in\mathbb C^d$ and matrices $A_0\in M_{d,dn}$, $A_{1},\dots,A_{t-1}\in M_{dn,dn}$ and $A_{t}\in M_{dn,d}$ with operator norm at most 1 such that 
    \begin{equation}\label{eq:CSfactorization}
        T(X_1,\dots, X_t)=\langle u, A_0(X_1\otimes \Id_d)A_1\dots A_{t-1}(X_t\otimes \Id_d)A_tv\rangle,
    \end{equation}
    for every $X_1,\dots,X_t\in M_n.$
\end{theorem}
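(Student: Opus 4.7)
\textit{Easy direction.} Given the factorization \eqref{eq:CSfactorization}, a direct block computation rewrites $T_m(X_1,\dots,X_t)$ as a submatrix of a product whose factors are $u^{*}\otimes I_m$, $v\otimes I_m$, the dilations $I_m\otimes A_i$, and appropriate reshufflings of $X_k\otimes I_d$ onto $\mathbb C^m\otimes\mathbb C^n\otimes\mathbb C^d$. Each factor has operator norm at most one, so $\|T_m\|\le 1$ for every $m$, and hence $\|T\|_{\cb}\le 1$.

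\textit{Hard direction.} The plan is to cast ``$T$ admits a factorization of form \eqref{eq:CSfactorization}'' as the feasibility of a semidefinite program whose dual, if infeasible, produces a tuple $(u,v,X_1,\dots,X_t)$ certifying $\|T\|_{\cb}>1$. Standard dimension-counting arguments first yield a priori bounds $m\le m^{*}(n,t)$ at which $\|T\|_{\cb}=\|T_m\|$ is attained, and $d\le d^{*}(n,t)$ which suffices for factorization, rendering the entire problem finite-dimensional. Guided by the $t=1$ case (where \eqref{eq:CSfactorization} collapses to the classical SDP description of the nuclear norm), the primal variables are \emph{block Gram matrices} of the vectors $A_k(X_{k+1}\otimes I_d)A_{k+1}\cdots A_t v$ as $(X_{k+1},\dots,X_t)$ ranges over a basis of $M_n^{t-k}$, together with cross-Gram blocks linking consecutive levels. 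The bounds $\|A_i\|\le 1$ translate into PSD inequalities between adjacent-level Gram blocks (after the partial trace against $X\otimes I_d$), the endpoints $u,v$ give trace-bounded PSD blocks, and \eqref{eq:CSfactorization} itself becomes a family of linear constraints equating top-level Gram entries to the coefficients of $T$. By standard regularity (Slater's condition, or compactness and closedness of the feasible cone) strong duality applies: dual infeasibility certificates are PSD matrices that Cholesky-decompose into precisely the test tuples $(u,v,X_1,\dots,X_t)$ realizing $\|T_m\|>1$; conversely a feasible primal Cholesky-decomposes into matrices $A_i$ of the prescribed sizes $M_{d,dn}$, $M_{dn,dn}$, $M_{dn,d}$. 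The algorithmic claim then follows because the SDP has size polynomial in $n^t$.

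\textit{Main obstacle.} The principal difficulty is designing the Gram-matrix parameterization so that it simultaneously (i) linearizes the genuinely multilinear identity \eqref{eq:CSfactorization}, (ii) is tight enough for feasible primals to decompose into matrices of the correct tensor-product shape, and (iii) has a dual whose certificates are $\cb$-witnesses. A naive parameterization by the $A_i$'s themselves is polynomial rather than semidefinite; an overly generous Gram relaxation breaks the tensor structure and admits spurious solutions. The right balance is enforced by the $I_d$-equivariance inherent in the action of $X\otimes I_d$, which ties together the auxiliary Gram matrices across test inputs. Verifying that this equivariance, together with PSDness, is exactly what is needed to extract legitimate factorization matrices of the prescribed shapes is, I expect, the technical heart of the argument.
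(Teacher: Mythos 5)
Your strategy is the same as the paper's: parameterize the factorization by the Gram matrix of the intermediate vectors $A_{t-s}(I_1)\cdots A_t(I_{s+1})v$, express the contraction bounds as PSD inequalities between adjacent-level Gram blocks, and invoke strong SDP duality to identify the factorization value with the cb norm. So the outline is on target, and the paper confirms your suspicion that the Gram parameterization plus contraction inequalities is the right object.

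However, there is a genuine gap: you identify the technical heart of the argument and then explicitly leave it open. Concretely, you write that the difficulty is designing the Gram relaxation so it is ``tight enough for feasible primals to decompose into matrices of the correct tensor-product shape'' and worry that an overly loose relaxation admits spurious solutions; you then defer this to ``$I_d$-equivariance'' without specifying what constraints enforce it or why they suffice. The paper resolves this with specific inequalities --- for each level $s$, the constraint $\sum_i (Y_{(i,j)\ind J,(i,j')\ind J'})_{j,j',\ind J,\ind J'} \preceq \oplus_k (Y_{\ind J,\ind J'})_{\ind J,\ind J'}$ --- and then shows these constraints guarantee that the partial map $A_{t-s}(i,j)\colon v_{\ind J}\mapsto v_{(i,j)\ind J}$ is \emph{well defined} on $\mathrm{span}\{v_{\ind J}\}$ (because a vanishing combination on the right forces a vanishing combination on the left) and, after extension by zero, a contraction. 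This well-definedness argument is precisely the step that rules out spurious solutions, and your proposal does not supply it. Two further remarks: the a priori bounds $m\le m^*(n,t)$ are not needed --- the dual SDP directly evaluates to $\|T\|_{\cb}$ by rewriting its feasible region as the set of ``test tuples'' $R_{\ind I}=\langle u,A_1(I_1)\cdots A_t(I_t)v\rangle$ with $\|u\|^2+\|v\|^2\le 1$ --- and the paper phrases the primal not as a feasibility problem but as the minimization of the norm squared $w=\|u\|^2=\|v\|^2$, which is what makes its optimal value equal to $\|T\|_{\fa}$ and hence makes the dual equality $\|T\|_{\fa}=\|T\|_{\cb}$ the content of the theorem. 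Slater's condition is also not automatic; the paper exhibits an explicit strictly feasible dual point.
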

To the best of our knowledge, all the proofs of \cref{theo:CS} are done in more generality, considering the space of endomorphisms of some complex Hilbert space instead of $M_n$. Even when specified to the finite-dimensional case, they require tools from operator spaces, several applications of the Hahn-Banach theorem, going through infinite-dimensional spaces and are not constructive \cite{christensen1987representations,PAULSEN1987258,BLECHER1991262}. 

By contrast, we give an elementary and constructive proof that does not require the use of Hahn-Banach. We do this by showing that \cref{theo:CS} is an instance of the strong duality of semidefinite programming. In the proof we use as a black-box a standard and elementary result due to Slater that ensures strong duality. We stress that the proof of Slater's theorem just requires a finite-dimensional separation theorem, but no application of the Hahn-Banach theorem (for a proof see \cite{watrous2009semidefinite}). Assuming Slater's theorem, our argument just requires simple notions of linear algebra. 

Semidefinite programming is an extension of linear programming that can model a bigger family of problems and can still be efficiently solved up to arbitrary precision (see \cite{LAURENT2005393} for an introduction to semidefinite programming). To be more precise, let $H_N$ be the space of Hermitian matrices of $M_N$ and let $H_N^+$ be the cone of positive semidefinite matrices. A collection of matrices $C,B_1,\dots, B_L\in H^N$ and a vector $b\in \mathbb{R}^L$  define a \emph{primal semidefinite program (P)} and a \emph{dual semidefinite program (D)}, which in their \emph{canonical form} are given by 
\begin{align}\label{eq:SDPDef}
	(P)\ \ &\inf &&\ \langle C,Y\rangle &&&\ (D)&&&&  \sup &&&&&\langle b,y\rangle\\
	&\text{s.t.}\ &&   Y\in H_N^+ &&& &&&& \text{s.t.} &&&&& y\in\mathbb{R}^L\nonumber\\
	 &  &&\mathcal{B}(Y)=b &&&  &&&& &&&&& C-\mathcal{B}^*(y)\in H_N^+,\nonumber
\end{align} 
where $\mathcal{B}:H_N\to \mathbb{R}^L$ is given by $\mathcal{B}(Y):=(\langle B_1,Y\rangle,\dots,\langle B_L,Y\rangle)$, $\mathcal{B}^*(y)=\sum_{i\in [L]}y_iB_i$ and $\langle B,Y\rangle=\text{Tr}(BY)$. 
It is always satisfied that the optimal value of $(P)$ is at least the optimal value of $(D)$, what is known as \emph{weak duality}. In addition, under some mild assumptions provided by Slater's theorem (see \cref{theo:slater} below), both values are equal, what is known as \emph{strong duality}.\footnote{Usually semidefinite programs are phrased in terms of real symmetric  matrices, but the weak and strong dualities hold when one substitutes symmetric matrices by Hermitian ones, as they are general properties of conic programs (see for instance \cite{watrouslecturenotes}).} Note that if all matrices $C,B_1,\dots, B_L$ were diagonal, $(P)$ and $(D)$ would be linear programs.

It will be convenient  to introduce the factorization norm of a $t$-linear form $T:(M_n)^t\to\mathbb C$, which is given by 
\begin{align*}
    \norm{T}_{\fa}=&\inf &&w\\
    & \mathrm{s.t.} &&  T(X_1,\dots, X_t)=\langle u, A_0(X_1\otimes \Id_d)A_1\dots A_{t-1}(X_t\otimes \Id_d)A_tv\rangle,\\ & && \forall X_1,\dots,X_t\in M_n,\\ 
    & && d\in\mathbb N,\ u,v\in\mathbb C^d,\ \norm{u}_2^2=\norm{v}^2_2=w,\\ 
    & && A_0\in M_{d,dn},\ A_1,\dots,A_{t-1}\in M_{dn,dn},\ A_d\in M_{dn,d}\ \text{contractions}.
\end{align*}

Now, we are ready to state our main result. 

\begin{theorem}\label{theo:main}
    Given a $t$-linear form $T:(M_n)^t\to \mathbb C$, there is a pair of semidefinite programs $(P_{CS})$ and $(D_{CS})$ such that 
    \begin{enumerate}[i) ]
        \item $(P_{CS})$ optimal value equals $\norm{T}_{\fa}$,\label{item:main1}
        \item $(D_{CS})$ optimal value equals $\norm{T}_{\cb}$, \label{item:main2}
        \item $(D_{CS})$ is the dual of $(P_{CS})$ and their optimal values are equal. \label{item:main3}
    \end{enumerate}
\end{theorem}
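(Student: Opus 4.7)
The plan is to encode $\|T\|_{\fa}$ as a semidefinite program $(P_{CS})$ in the canonical form of \eqref{eq:SDPDef}, compute its Lagrange dual $(D_{CS})$, identify the dual value with $\|T\|_{\cb}$, and invoke Slater's theorem to establish strong duality.

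The first step is to rewrite the factorization condition so that it is affine in positive semidefinite variables. Writing each matrix in blocks $A_k = (A_k^{r, c})_{r, c \in [n]}$ with $A_k^{r, c} \in M_d$ (and analogously $A_0$ as a block row, $A_t$ as a block column), a direct computation yields
\[
  T_{i_1 j_1, \ldots, i_t j_t} := T(E_{i_1 j_1}, \ldots, E_{i_t j_t}) = u^* A_0^{i_1} A_1^{j_1, i_2} \cdots A_{t-1}^{j_{t-1}, i_t} A_t^{j_t} v .
\]
The multilinearity in $(u, v, A_0, \ldots, A_t)$ is then removed by passing to Gram matrices of partial products: for each cut position $k$, define
\[
  p_k^{(i_1, j_1, \ldots, i_k)} := \bigl(u^* A_0^{i_1} \cdots A_{k-1}^{j_{k-1}, i_k}\bigr)^{*} \in \mathbb{C}^d , \quad q_k^{(j_k, \ldots, j_t)} := A_k^{j_k, i_{k+1}} \cdots A_t^{j_t} v \in \mathbb{C}^d ,
\]
and consider the Gram matrices $G_k = (\langle p_k^s, p_k^{s'}\rangle)_{s, s'}$ and $H_k = (\langle q_k^s, q_k^{s'}\rangle)_{s, s'}$ together with the cross-Gram $(\langle p_k^s, q_k^{s'}\rangle)_{s, s'}$. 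The recursion $p_{k+1}^{(s, j, i)} = (A_k^{j, i})^{*} p_k^s$ combined with the contractivity $\|A_k\| \leq 1$ becomes a linear matrix inequality linking $G_k$ and $G_{k+1}$ through an auxiliary PSD variable encoding $A_k A_k^*$ (and symmetrically for the $H_k$); the normalizations $\|u\|^2 = \|v\|^2 = w$ become scalar constraints on the initial Gram matrices; and the $T$-equations become linear equalities on specific scalar entries of the cross-Gram. The resulting feasibility problem is an SDP $(P_{CS})$ in the canonical form of \eqref{eq:SDPDef}, and its optimum is the minimum admissible $w$. Conversely, a Cholesky decomposition of any feasible PSD tuple recovers a genuine factorization of $T$ with $\|u\|^2 = \|v\|^2 = w$ and contractive $A_k$'s, so $\mathrm{val}(P_{CS}) = \|T\|_{\fa}$.

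The dual $(D_{CS})$, obtained by a mechanical computation, has complex scalar variables dual to the $T$-equations (organized into a single tensor $\Xi$) together with PSD matrices dual to the contractivity constraints. Applying a Cholesky-type decomposition to the PSD dual variables identifies them with test matrices $X_1, \ldots, X_t \in M_{nm}$ satisfying $\|X_k\| \leq 1$ and unit vectors $\xi, \eta \in \mathbb{C}^m$, where $m$ is the size of the dual PSD variables. Substituting these into the dual objective yields $|\langle \eta, T_m(X_1, \ldots, X_t) \xi\rangle|$, and taking the supremum over all feasible dual points (letting $m$ grow) recovers $\sup_m \|T_m\| = \|T\|_{\cb}$.

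To conclude via Slater's theorem, I construct a strictly feasible primal point: take $d$ large, choose $u, v$ to be small nonzero multiples of a standard basis vector, and set the $A_k$'s to be small generic block matrices so that the induced Gram matrices $G_k, H_k$ are positive definite; $A_0$ and $A_t$ can then be adjusted linearly to match the $T$-equations while a small positive perturbation preserves strict positivity. Slater's theorem then yields $\mathrm{val}(P_{CS}) = \mathrm{val}(D_{CS})$, which combined with the previous two steps proves the theorem. The main obstacle is the identification of $(D_{CS})$ with $\|T\|_{\cb}$: this requires matching the block structure of the dual PSD variables with that of the matrices $X_k \in M_{nm}$ appearing in the definition of $T_m$, which is where the nontrivial content of the Christensen--Sinclair theorem is encoded.
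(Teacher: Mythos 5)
Your high-level plan (Gram-matrix SDP, Lagrange dual, Slater) matches the paper's, but the specific primal formulation you sketch has a genuine gap that the paper's construction is precisely designed to avoid. You propose to store separate Gram matrices $G_k$ of the left partial products $p_k$ and $G_{k+1}$ of $p_{k+1}$, and to link them through an auxiliary PSD variable encoding $A_k A_k^*$. But from the recursion $p_{k+1}^{(s,j,i)}=(A_k^{j,i})^* p_k^s$ one gets
\[
(G_{k+1})_{(s,j,i),(s',j',i')}=(p_k^s)^* A_k^{j,i}\,(A_k^{j',i'})^*\,p_k^{s'},
\]
which is \emph{bilinear} in $A_k$ and cannot be recovered from $G_k$ and the single matrix $A_k A_k^*$ alone: knowing the Gram of $\{p_k^s\}$ and the matrix $A_k A_k^*$ does not determine the Gram of $\{(A_k^{j,i})^*p_k^s\}$. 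Any linearization of this relation yields a relaxation, and you give no argument that the relaxation is tight. The paper sidesteps this entirely: it puts \emph{all} one-sided partial products $v_{\ind I}=A_{t-s}(I_1)\cdots A_t(I_{s+1})v$, across all lengths $s$, into a \emph{single} Gram matrix $Y$, and then formulates the constraints \cref{eq:A_tcontr,eq:A_scontr,eq:A_0contr} as LMIs among the blocks of $Y$. The crucial (and nonobvious) lemma, which your proposal is missing, is the converse direction: given any $Y\succeq 0$ satisfying these LMIs, one can \emph{define} $A_{t-s}(i,j)$ on $\mathrm{span}\{v_{\ind J}\}$ by $v_{\ind J}\mapsto v_{(i,j)\ind J}$, extend by $0$ to the orthogonal complement, and the LMIs are exactly what make this well-defined and contractive. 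Without tracking the joint Gram of $\{v_{\ind J}\}$ and $\{v_{(i,j)\ind J}\}$ in the same matrix, this argument is unavailable to you.

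Two secondary differences. First, you verify Slater via a strictly feasible \emph{primal} point, whereas the paper exhibits an explicit strictly feasible \emph{dual} point (a scaled identity plus small diagonal on the $y$-blocks); the paper's choice is much easier to make rigorous, since your construction must satisfy the exact equality constraints $Y_{0,\ind I}=T_{\ind I}$ while keeping $Y$ positive definite, and the hand-waving about ``adjusting $A_0,A_t$ linearly'' does not obviously preserve all the LMIs. Second, your identification of the dual value with $\|T\|_{\cb}$ via ``Cholesky of the dual PSD variables gives test matrices $X_k$'' is the right intuition, but the paper gets there by a cleaner route: it shows the dual LMIs \cref{eq:bigcontraction0,eq:bigcontractions} are equivalent to the existence of contractions $A_s$ with $v_{\ind I}=A_{t-s+1}(I_1)\cdots A_t(I_s)v$, exactly mirroring the primal analysis, and then observes that the resulting form of the dual objective is a known reformulation of $\|T\|_{\cb}$. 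Your two-sided parametrization with $p_k$'s, $q_k$'s, cross-Grams, and auxiliaries would also make the ``mechanical'' dual computation considerably messier than the paper's.
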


\cref{theo:main} has three important consequences. The first one is already clear from the statement, and the other two will become clear later (see \cref{rem:Items2and3}). These consequences are:
\begin{enumerate}
    \item \cref{theo:main} implies \cref{theo:CS};\label{item:consquence1}
    \item $(P_{CS})$ and $(D_{CS})$ have $O(\poly(n)^t)$ variables, so the known algorithms to approximate semidefinite programs can be used to efficiently compute the completely bounded norm;\label{item:consequence2}
    \item from the solution returned by these algorithms one can extract a description of the vectors and matrices appearing in a factorization of the kind in \cref{eq:CSfactorization}.\label{item:consequence3}
\end{enumerate}

\subsection{Some remarks}
We note that \cref{theo:main} works when we substitute $M_n$ by any subspace of $M_n$, i.e, when we consider an operator space that inherits its structure from $M_n$, and the algorithmic consequences hold as well. By this we mean that if $V$ is a normed subspace of $M_n$, then, for every $m\in\mathbb N$, $V\otimes M_m$ inherits a norm as a subspace of $M_{mn}$, and that norm defines a notion of completely bounded norm for $t$-linear maps $T:V^t\to \mathbb C$. We also remark that our proof of \cref{theo:main} can be extended to the case where $M_n$ is substituted by the space of endomorphisms of a separable complex space. However, in that case one requires Hahn-Banach to prove strong duality and the algorithmic consequences would be lost. It is not clear to us how to extend the proof to the non-separable case, as our technique heavily relies on the existence of a countable orthonormal basis. 

\subsection{Related work.} An analogous result for the linear case ($t = 1$) was proven by Watrous \cite{watrous2009semidefinite,watrous2012simpler}. Gribling and Laurent showed that the completely bounded norm of a form $T:(\ell_{\infty}^n(\mathbb C))^t\to \mathbb C$ can be computed as a semidefinite program, so item  \eqref{item:consequence2} above, for the special case of $\ell_{\infty}^n(\mathbb C)$, also follows from their work \cite{gribling2019semidefinite}. The initial motivation of our  work comes from the applications of the Christensen-Sinclair theorem to quantum information. Arunachalam, Bri\"et and Palazuelos used this result to give a characterization of quantum query complexity, showing that every quantum query algorithm is a completely contractive form, and vice versa \cite{QQA=CBF}. However, prior to this work, given a completely contractive form there was no way of obtaining the corresponding algorithm, which  in our context is equivalent to finding the matrices that define a factorization as in \cref{eq:CSfactorization}. Item \eqref{item:consequence3} fills this gap. Although conceptually different, the proof of the operator space Grothendieck theorem by Regev and Vidick is similar in spirit to our result. First, they applied that theorem of operator spaces to prove results in the context of quantum information and complexity theory \cite{naor2013efficient,regev2015quantum}. Then, they reproved it in an elementary way using tools from quantum information~\cite{regev2014elementary}.

\section{Proof} 

\subsection{Notation} We use $[n]$ to denote $\{1,\dots,n\}$. Sometimes we will consider multi-indices belonging to $([n]\times [n])^s$ and refer to them as $\ind{I}$, where $I_1,\dots,I_s\in [n]\times [n]$. $M_{m,n}$ is the space of $m\times n$ complex matrices and $M_n=M_{n,n}$. $\{E_{i,j}\}_{i,j\in [n]}$ is the canonical basis of $M_n$, where the $(i',j')$-th entry of $E_{i,j}$ is given by $\delta_{i,i'}\delta_{j,j'}$.  Given a matrix $X\in M_{n}$ and $I=(i,j)\in [n]\times [n]$, we say that $X_I$ is the coordinate of $X$ in the former basis, corresponding to $E_{i,j}$. Sometimes we will consider $X\in M_{mn}$ and $I\in [n]\times [n]$, in which case $X(I)$ will be $I$-th block when we regard $X$ as $n\times n$ block-matrix with blocks of size $m\times m$. Given $s$ matrices $X_1,\dots,X_s\in M_n$, we denote by $\ind{X}$ the matrix-vector defined by $(X_1,\dots,X_s)$. Given $\ind{I}\in ([n]\times [n])^s$ and $\ind{X}\in (M_{mn})^s$, $\ind{X}(\ind{I})$ is defined as the matrix product given by $X_1(I_1)\dots X_t(I_t)$. The norm of vectors of $\mathbb{C}^d$ is the Euclidean norm, and the norm of matrices is the operator norm when considered them as linear maps from $\ell_2(\mathbb C)$ to $\ell_2(\mathbb C)$. We say that a matrix is a contraction if its norm is at most 1. Given a vector $\alpha\in\mathbb{C}^n$, $\Diag(\alpha)$ is the diagonal matrix of $M_n$ whose diagonal is $\alpha$. $\Id_d$ is the identity matrix of $M_d$. Given $z\in \mathbb C$, $\Re z\in \mathbb R$ is its real part and $\Im z\in \mathbb R$ its imaginary part. 

We will often identify a $t$-form $T:(M_n)^t\to \mathbb C$ with its tensor of coefficients $(T_{\ind I})_{\ind I}\in (\mathbb C^{n\times n})^t$ defined via $T_{\ind I}=T(E_{I_1},\dots,E_{I_t}),\ \text{so}\ T(\ind X)=\sum_{\ind {I}\in ([n]\times [n])^t}T_{\ind I}X(\ind I).$ One can also write $T_m:(M_{nm})^t\to\mathbb C$ in terms of these coefficients, namely, $T_m(\ind X)=\sum_{\ind i\in ([n]\times [n])^t}T_{\ind I} X(\ind I)$; where, given $I\in [n]\times [n]$ and $X\in M_{nm}$, $\ind X(I)$ is the $I$-th $m\times m$-dimensional block of $X$ when regarded as a block-matrix with $n\times n$ blocks.

We recall the reader that for $Y\in M_N$, $Y\succeq 0$ if and only if there exist vectors $v_i$ such that $Y_{(i,j)}=\langle v_i,v_j\rangle$ for every $i,j\in [n]$. In that case, we say that $Y$ is the Gram matrix of $\{v_i\}_{i\in [n]}.$

\subsection{The primal semidefinite program}
In this section, we introduce $(P_{CS})$ and prove \cref{theo:main} \eqref{item:main1}.  Before doing that we give some intuition of why $\norm{T}_{\fa}$ can be formulated as a semidefinite program. Assume that $T$ factors as in \cref{eq:CSfactorization} with vectors of $u$ and $v$ of square norm $w.$ Then, we consider the following block structure for the contractions $A_s$: 
	\begin{align}
		A_0=\begin{pmatrix}
			A_0(1)\\
			\vdots\\
			A_0(n)
		\end{pmatrix},\ A_s=\begin{pmatrix}
		A_s(1,1) & \dots & A_s(1,n)\\ \vdots & \ddots & \vdots\\ A_s(n,1)& \dots & A_s(n,n)
	\end{pmatrix},\ A_t=\begin{pmatrix}
	A_t(1)\\ \dots\\ A_t(n)
\end{pmatrix},\label{eq:blockstructure}
\end{align}
We define the following vectors, 
\begin{align}
    v_i&=A_t(i)v,\ \mathrm{for}\ i\in [n],\label{eq:defvi}\\ 
    v_{\ind{I}}&=A_{t-s}(I_1)\dots A_{t-1}(I_{s})A_t(I_{s+1})v,\ \mathrm{for}\ \ind{I}\in ([n]\times [n])^{s}\times [n],\ s\in [t-1],\label{eq:defvI1}\\
    v_{\ind{I}}&=A_0(I_1)A_1(I_2)\dots A_t(I_{t+1}), \mathrm{for}\ \ind{I}\in [n]\times ([n]\times [n])^{t-1}\times [n].\label{eq:defVI2}
\end{align}
Now, one should note that $T_{\ind I}=\langle u,v_{\ind I}\rangle $ for every $\ind I\in ([n]\times [n])^t$, so $T_{\ind I}$ is encoded in the entries of  $Y=$Gram$\{u,v_{\ind I}\}$ (which corresponds to \eqref{eq:approx1} below). In addition, the fact that $A_i$ are contractions can be encoded in the entries of this Gram matrix (which gives rise to \cref{eq:A_0contr,eq:A_scontr,eq:A_tcontr} below). With these intuitions, we are ready to state $(P_{CS})$: 
\begin{align}\label{eq:CSinSDP}
	& \inf &&w\tag{$P_{CS}$}\\
	&\mathrm{s.t.} && w\geq 0,\ Y\succeq 0,\label{eq:PSDofConstraintofCSinSDP}\\
	& &&\Re Y_{0,\ind{I}}= \Re T_{\ind I},\ \Im Y_{0,\ind{I}}=\Im T_{\ind I},\ \ind I\in ([n]\times [n])^t,\label{eq:approx1}\\
	& &&Y_{0,0}=w,\label{eq:uunit}\\
	& && \sum_{i\in [n]} Y_{i,i}\leq w,\label{eq:A_tcontr}\\
	& &&  \sum_{i\in [n]}(Y_{(i,j)\ind{J}, (i,j')\ind{J}'})_{j,j'\in[n], \ind{J},\ind{J}'\in ([n]\times [n])^{s-1}\times [n]}\label{eq:A_scontr}\preceq\oplus_{k\in [n]}(Y_{\ind{J}, \ind{J}'})_{\ind{J},\ind{J}'\in ([n]\times [n])^{s-1}\times [n]},\  s\in [t-1],\\
	& && \sum_{i\in [n]}(Y_{i\ind{J}, i\ind{J}'})_{\ind{J},\ind{J}'\in ([n]\times [n])^{t-1}\times [n]}\preceq (Y_{\ind{J}, \ind{J}'})_{\ind{J},\ind{J}'\in ([n]\times [n])^{t-1}\times [n]},\label{eq:A_0contr}
\end{align}
where $Y\in M_{D}$ and $D=1+n+n^3+\dots+n^{2t-3}+n^{2t-1}+n^{2t}$. The rows and columns of $Y$ are labeled by the elements of $\{0\}\cup [n]\cup [n]^3\cup\dots [n]^{2t-3}\cup [n]^{2t-1}\cup [n]^{2t}$.

\begin{proof}[ of \cref{theo:main} \eqref{item:main1}] Note that \cref{eq:approx1} ensures that $(Y_{0,\ind{I}})_{\ind{I}}$ equals $(T_{\ind I})_{\ind I}$. Thus, it suffices to show that a tensor $(R_{\ind I})_{\ind I}$ satisfies $\norm{R}_{\fa}=w$ if and only if there is a matrix $Y$  that satisfies \cref{eq:uunit,eq:A_tcontr,eq:A_scontr,eq:A_0contr}
 and $Y_{0,\ind I}=R_{\ind I}$. 

Assume first that $(R_{\ind I})_{\ind I}$ factors as in  \cref{eq:CSfactorization} for some vectors with $\norm{u}^2=\norm{v}^2=w$. Then, consider the block structure for the contractions $A_s$ given in \cref{eq:blockstructure}, and define the vectors $\{u,v_{\ind{I}}:\ \ind{I}\in [n]\cup_{s\in [t-1]} ([n]\times [n])^s\times [n]\cup [n]\times ([n]\times [n])^{t-1}\times [n]\}$ as in \cref{eq:defvi,eq:defvI1,eq:defVI2}. 
	Then, $R_{\ind{I}}=\langle u,v_{\ind{I}}\rangle,$
	for every $\ind{I}\in ([n]\times [n])^t$. This way, if we consider the positive semidefinite matrix $$Y:=\mathrm{Gram}\{u,v_{\ind{I}}:\ \ind{I}\in [n]\cup_{s\in [t-1]} ([n]\times [n])^s\times [n]\cup [n]\times ([n]\times [n])^{t-1}\times [n]\},$$ and we label the rows and columns corresponding to $u$ with $0$ and the ones corresponding to $v_\ind{I}$ with $\ind{I}$, we have that $R_{\ind{I}}=Y_{0,\ind{I}}$
	for every $\ind{I}\in ([n]\times [n])^{t}.$  \cref{eq:uunit} follows from the fact that $\norm{u}^2=w.$ From the fact that $A_t$ is a contraction, \cref{eq:A_tcontr} follows: 
	\begin{equation*}\label{eq:A_tcontr2}
		\sum_{i\in [n]}Y_{i,i}=\sum_{i\in [n]}\langle v_i,v_i\rangle=\langle v,\sum_{i\in [n]}A(i)^\dagger A(i)v=\langle v,A_t^\dagger A_tv\rangle \leq \langle v, v\rangle=w.
	\end{equation*} 
	From the fact that $A_s$ are contractions for $s\in [t-1]$ \cref{eq:A_scontr} follows. 
	Indeed, let $\lambda\in \mathbb{C}^{[n]\times([n]\times [n])^{s-1}\times [n]}$. Then, 
	\begin{align*}
		&\langle \lambda,\sum_{i\in [n]}(Y_{(i,j)\ind{J}, (i,j')\ind{J}'})_{j,j'\in[n], \ind{J},\ind{J}'\in ([n]\times [n])^{s-1}\times [n]}\lambda\rangle \\
		&=\sum_{i\in [n],j,j'\in [n],\ind{J},\ind{J}'\in  ([n]\times [n])^{s-1}\times [n]}\lambda_{j\ind{J}}^*\langle v_{(i,j)\ind{J}},v_{(i,j')\ind{J}'}\rangle\lambda_{j'\ind{J}'}\\
		&=\sum_{j,j'\in [n],\ind{J},\ind{J}'\in  ([n]\times [n])^{s-1}\times [n]}\lambda_{j\ind{J}}^*\langle v_{\ind{J}},\left(\sum_{i\in [n]}A_{t-s}^\dagger (j,i)A_{t-s}(i,j')\right)v_{\ind{J}'}\rangle\lambda_{j'\ind{J}'}\\
		&=\sum_{j,j'\in [n],\ind{J},\ind{J}'\in  ([n]\times [n])^{s-1}\times [n]}\lambda^*_{j\ind{J}}\langle v_{\ind{J}},(A_{t-s}^\dagger A_{t-s})(j,j')v_{\ind{J}'}\rangle \lambda_{j'\ind{J}'}\\
		&\leq \sum_{j,j'\in [n],\ind{J},\ind{J}'\in  ([n]\times [n])^{s-1}\times [n]}\lambda_{j\ind{J}}^*\langle v_{\ind{J}},\delta_{j,j'}v_{\ind{J}'}\rangle\lambda_{j'\ind{J}'}\\
		&=\sum_{j\in [n],\ind{J},\ind{J}'\in  ([n]\times [n])^{s-1}\times [n]}\lambda^*_{j\ind{J}}(Y_{\ind{J}, \ind{J}'})_{\ind{J},\ind{J}'\in ([n]\times [n])^{s-1}\times [n]}\lambda_{j\ind{J}'}\\
		&=\langle \lambda,\oplus_{k\in [n]}(Y_{\ind{J}, \ind{J}'})_{\ind{J},\ind{J}'\in ([n]\times [n])^{s-1}\times [n]}\lambda\rangle ,
	\end{align*} 
	where in the second equality we have used that $A_{t-s}(i,j)^\dagger=A_{t-s}^\dagger (j,i)$ and in the inequality we have used that $A_{t-s}$ is a contraction, namely that $A_{t-s}^\dagger A_{t-s}\preceq \Id_{dn}$. The fact that $A_0$ is a contraction implies \cref{eq:A_0contr} and its
	 check follows the same philosophy as the one of \cref{eq:A_scontr}. 
	
	Now assume that there exists $Y\succeq 0$,  satisfying equations \cref{eq:uunit,eq:A_tcontr,eq:A_scontr,eq:A_0contr} and $R_{\ind{I}}=Y_{0,\ind{I}}$. Consider $d\in \mathbb{N}$ and vectors $\{u,v_{\ind{I}}:\ \ind{I}\in [n]\cup_{s\in [t-1]} ([n]\times [n])^s\times [n]\cup [n]\times ([n]\times [n])^{t-1}\times [n]\}\in \mathbb{C}^d$ such that $$Y=\mathrm{Gram}\{u,v_{\ind{I}}:\ \ind{I}\in [n]\cup_{s\in [t-1]} ([n]\times [n])^s\times [n]\cup [n]\times ([n]\times [n])^{t-1}\times [n]\},$$
	where again we label by $0$ the rows and columns corresponding to $u$ and by $\ind{I}$ the ones corresponding to $v_{\ind{I}}$. \cref{eq:uunit} implies that $\norm{u}^2=w$. We define $A_t$ through its blocks. Let $v\in\mathbb{C}^d$ be a vector with $\norm{v}^2=w$.  $A_t(i)\in M_d$ is defined as the matrix that maps $v$ to $v_i$ and is extended by $0$ to the orthogonal complement of $\text{span}\{v\}$. This way, $A_t$ is a contraction, because 
	\begin{align*}
		\norm{A_t}^2=\frac{\langle v,A_t^\dagger A_tv\rangle}{w}=\frac{1}{w}\sum_{i\in [n]}\langle v,A_t(i)^\dagger A_t(i)v\rangle =\frac{1}{w}\sum_{i\in [n]}\langle v_i,v_i\rangle =\frac{1}{w}\sum_{i\in [n]}Y_{i,i}\leq 1,
	\end{align*} 
	where in the inequality we have used \cref{eq:A_tcontr}. The definition of $A_{t-s}$ for $s\in [t-1]$ is slightly more complicated. The block $A_{t-s}(i,j)$ is defined as linear map on $\text{span}\{v_{\ind{I}}:\ind{I}\in ([n]\times [n])^{s-1}\times [n]\}$ by $$A_{t-s}(i,j)v_{\ind{J}}=v_{(i,j)\ind{J}}$$ and extended by $0$ to the orthogonal complement. First, we have to check that this a good definition, namely that for every $\lambda\in\mathbb{C}^{n^{2(s-1)+1}}$ $$\sum_{\ind{J}\in ([n]\times [n])^{(s-1)}\times [n]}\lambda_\ind{J}v_{(i,j)\ind{J}}=0\implies \sum_{\ind{J}\in ([n]\times [n])^{(s-1)}\times [n]}\lambda_\ind{J}v_{\ind{J}}=0.$$
	Indeed, we can prove something stronger. For any $\lambda\in\mathbb{C}^{n^{2(s-1)+1}}$, we define $\tilde{\lambda}\in \mathbb{C}^{n^{1+2(s-1)+1}}$ by $\tilde{\lambda}_{j'\ind{J}}:=\delta_{j,j'}\lambda_{\ind{J}}$. Hence, 
	\begin{align*}
		&\langle \sum_{\ind{J}\in ([n]\times [n])^{s-1}\times [n]}\lambda_\ind{J}v_{(i,j)\ind{J}},\sum_{\ind{J}'\in ([n]\times [n])^{s-1}\times [n]}\lambda_{\ind{J}'}v_{(i,j)\ind{J}'}\rangle\\
		&=\langle \lambda, (Y_{(i,j)\ind{J}, (i,j)\ind{J}'})_{\ind{J},\ind{J}'\in ([n]\times [n])^{s-1}\times [n]}\lambda\rangle \\
		&=\langle \tilde{\lambda},(Y_{(i,j')\ind{J}, (i,j'')\ind{J}'})_{j',j''\in[n], \ind{J},\ind{J}'\in ([n]\times [n])^{s-1}\times [n]},\tilde{\lambda}\rangle\\
		&\leq \langle \tilde{\lambda},\sum_{i'\in [n]}(Y_{(i',j')\ind{J}, (i',j'')\ind{J}'})_{j',j''\in[n], \ind{J},\ind{J}'\in ([n]\times [n])^{s-1}\times [n]}\tilde{\lambda}\rangle\\
		&\leq\langle \tilde{\lambda},\oplus_{k\in [n]}(Y_{\ind{J}, \ind{J}'})_{\ind{J},\ind{J}'\in ([n]\times [n])^{s-1}\times [n]}\tilde{\lambda}\rangle\\
		&=\langle \lambda,(Y_{\ind{J}, \ind{J}'})_{\ind{J},\ind{J}'\in ([n]\times [n])^{s-1}\times [n]}\lambda\rangle\\
		&=\langle \sum_{\ind{J}\in ([n]\times [n])^{s-1}\times [n]}\lambda_\ind{J}v_{\ind{J}},\sum_{\ind{J}'\in ([n]\times [n])^{s-1}\times [n]}\lambda_{\ind{J}'}v_{\ind{J}'}\rangle,
	\end{align*}
	where in the first inequality we have used that $$(Y_{(i',j')\ind{J}, (i',j'')\ind{J}'})_{j',j''\in[n], \ind{J},\ind{J}'\in ([n]\times [n])^{s-1}\times [n]}\succeq 0$$ for every $i'\in [n]$, and in the second inequality we have used \eqref{eq:A_scontr}. Now, we have to check that $A_{t-s}$ is a contraction. By the definition of $A_{t-s}$, we just have to check that for every $\lambda\in\mathbb{C}^{n^{1+2(s-1)+1}},$ $$\lambda v:=\begin{pmatrix}
		\sum_{\ind{J}_1\in ([n]\times [n])^{s-1}\times [n]}\lambda_{1\ind{J}_1}v_{\ind{J}_1}\\
		\vdots\\
		\sum_{\ind{J}_n\in ([n]\times [n])^{s-1}\times [n]}\lambda_{n\ind{J}_n}v_{\ind{J}_n}
	\end{pmatrix}$$
	is mapped to a vector with smaller or equal norm. Indeed, 
	\begin{align*}
		\langle A_{t-s}\lambda v,A_{t-s}\lambda v\rangle &=\sum_{i,j,j'\in [n],\ind{J},\ind{J'}\in [n]^{2(s-1)+1}}\lambda_{j\ind{J}}\langle v_{(i,j)\ind{J}}, v_{(i,j')\ind{J}'}\rangle \lambda_{j'\ind{J}'}\\
		&=\langle \lambda,\sum_{i\in [n]}(Y_{(i,j)\ind{J}, (i,j')\ind{J}'})_{j,j'\in[n], \ind{J},\ind{J}'\in ([n]\times [n])^{s-1}\times [n]}\lambda\rangle \\
		&\leq \langle \lambda,\oplus_{k\in [n]}(Y_{\ind{J}, \ind{J}'})_{\ind{J},\ind{J}'\in ([n]\times [n])^{s-1}\times [n]}\lambda\rangle\\
		&=\langle \lambda v,\lambda v\rangle,
	\end{align*}
	where in the inequality we have used \cref{eq:A_scontr}. Finally, we define $A_0$ through its blocks. $A_0(i)\in M_d$ is defined by $A_0(i)v_{\ind{J}}=v_{j\ind{J}}$ and extended by $0$ to the orthogonal complement of $\text{span}\{v_{\ind J}: \ind J\in [n]\times ([n]\times [n])^{t-1}\}$. Using \cref{eq:A_0contr}, we can check that these blocks are well-defined and that $A_0$ is a contraction using a similar argument to the one that we have just used to verify the same properties of $A_{t-s}$. 
\end{proof}
\begin{remark}\label{rem:Items2and3}
    $(P_{CS})$ has $O(\poly(n)^t)$ variables, so item \eqref{item:consequence2} holds. Item \eqref{item:consequence3} can be inferred from the proof of \cref{theo:main} \eqref{item:main1}, where a recipe to extract a factorization as in \cref{eq:CSfactorization} for $(Y_{0,\ind I})_{\ind I}$ satisfying \cref{eq:uunit,eq:A_0contr,eq:A_scontr,eq:A_tcontr} is given. 
\end{remark}

\subsection{The dual semidefinite program} In this section, we introduce $(D_{CS})$ and prove \cref{theo:main} \eqref{item:main2}.  $(D_{CS})$ is given by: 
\begin{align}\label{eq:TcbinSDP}
	&\sup &&\tag{$D_{CS}$} 2\sum_{\ind{I}\in([n]\times [n])^{t}} \Re (T_{\ind{I}})\Re (R_{\ind{I}})+\Im (T_{\ind{I}})\Im (R_{\ind{I}})\\
	&\mathrm{s.t.} && y_0, y_0'\geq 0,\ \begin{pmatrix}
		y_{\ind{J},\ind{J}'}
	\end{pmatrix}_{\ind{J},\ind{J}'\in ([n]\times [n])^{s}}\succeq 0,\ \mathrm{for}\ s\in[t],\label{eq:psdconditions}\\
& && y_0+y_0'\leq 1,\label{eq:ancillary}\\
	& && \mathrm{Diag}(y_0,\dots,y_0)\succeq \sum_{k\in [n]}
		(y_{ki,kj})_{i,j\in [n]},\label{eq:bigcontraction0}\\
 & && 
 	\oplus_{k\in [n]}\begin{pmatrix}
 		y_{\ind{J},\ind{J}'}
 	\end{pmatrix}_{\ind{J},\ind{J}'\in ([n]\times [n])^{s}}\succeq \sum_{i\in [n]}\begin{pmatrix}y_{(ij)\ind{J},(ij')\ind{J'}}\end{pmatrix}_{j,j'\in [n],\ind{J},\ind{J}'\in ([n]\times [n])^{s}},\label{eq:bigcontractions}\\
 & &&\mathrm{for\ }s\in [t-1],\nonumber\\
 & &&\left(\begin{array}{c|ccc}
 	y_0' & \dots &(R_{\ind{J}})_{\ind{J}\in([n]\times [n])^t}/2 & \dots\\ \hline
 	\vdots & & & \\ 
 	\frac{(R_{\ind{J}}^*)_{\ind{J}\in([n]\times [n])^t}}{2} & & \begin{pmatrix}
 		y_{\ind{J},\ind{J}'}
 	\end{pmatrix}_{\ind{J},\ind{J}'\in ([n]\times [n])^{t}}& \\
 \vdots & & &
 \end{array}
 \right)\succeq 0,\label{eq:RinGram2}
\end{align}

\begin{proof}[ of \cref{theo:main} \eqref{item:main2}]
	First, we note that \cref{eq:psdconditions} is the same as saying that there exist $d\in \mathbb{N}$ and vectors $\{u,v,v_{\ind{I}}:\ind{I}\in ([n]\times [n])^s,\ s\in [t]\}\in \mathbb{C}^d$ such that $y_0'=\langle u,u\rangle $, $y_0=\langle v,v\rangle $ and $y_{\ind{I},\ind{I}'}=\langle v_\ind{I},v_{\ind{I}'}\rangle.$ Then, \cref{eq:RinGram2} means that $R_{\ind{I}}=2\langle u,v_{\ind{I}}\rangle $ for every $\ind{I}\in ([n]\times [n])^t$. Next, we will show that \cref{eq:bigcontraction0,eq:bigcontractions} are equivalent to the existence of contractions $A_1,\dots,A_t\in M_{dn}$ such that
	\begin{equation}\label{eq:bigcontr2}
		v_{\ind{I}}=A_{t-s+1}(I_1)\dots A_t(I_s)v,
	\end{equation}
	for every $\ind{I}\in ([n]\times [n])^s$ and every $s\in [t]$, where $A_s(I)\in M_d$ is the $I$-th block of $A_s$ when regarded as a block-matrix with $n\times n$ blocks.  Indeed, assume that \cref{eq:bigcontraction0,eq:bigcontractions} hold. Then,  we define the $I$-th block of $A_{t-s}$ by $$A_{t-s}(I)v_{\ind{J}}:=v_{I\ind{J}}$$ for every $\ind{J}\in ([n]\times[n])^{s}$  and extend it by $0$ on the orthogonal complement of $\text{span}\{v_{\ind J}: \ind J\in ([n]\times [n])^s\}$. Before proving that $A_s$ are contractions, we have to check that $A_s(I)$ are well-defined as linear maps. Namely, that for every $\lambda\in\mathbb{C}^{n^{2s}}$ we have $$\sum_{\ind{J}\in ([n]\times[n])^{s}}\lambda_{\ind{J}}v_{I\ind{J}}=0\implies \sum_{\ind{J}\in ([n]\times[n])^{s}}\lambda_{\ind{J}}v_{\ind{J}}=0.$$ In fact, we can prove something stronger. Let $\lambda\in\mathbb{C}^{n^{2s}}$, and define $\tilde{\lambda}\in \mathbb{C}^{n^{1+2s}}$ by $\tilde{\lambda}_{j'\ind{J}}=\delta_{j,j'}\lambda_{\ind{J}}$. Then 
	\begin{align*}
		\langle \sum_{\ind{J}\in ([n]\times[n])^{s}}\lambda_{\ind{J}}v_{I\ind{J}},\sum_{\ind{J}'\in ([n]\times[n])^{s}}\lambda_{\ind{J}'}v_{I\ind{J}'}\rangle&=\langle\lambda,\begin{pmatrix}
			y_{I\ind{J},I\ind{J}'}
		\end{pmatrix}_{\ind{J},\ind{J}'\in ([n]\times [n])^s}\lambda\rangle\\
	&=\langle\tilde{\lambda},\begin{pmatrix}
		y_{(i,j)\ind{J},(i,j')\ind{J}'}
	\end{pmatrix}_{j,j'\in [n],\ind{J},\ind{J}'\in ([n]\times [n])^s}\tilde{\lambda}\rangle\\
	&\leq \langle\tilde{\lambda},\sum_{i'\in [n]}\begin{pmatrix}
		y_{(i',j)\ind{J},(i',j')\ind{J}'}
	\end{pmatrix}_{j,j'\in [n],\ind{J},\ind{J}'\in ([n]\times [n])^s}\tilde{\lambda}\rangle \\
	&\leq  \langle \lambda,\begin{pmatrix}
		y_{\ind{J},\ind{J}'}
	\end{pmatrix}_{\ind{J},\ind{J}'\in ([n]\times [n])^s}\lambda\rangle\\
	&=\langle \sum_{\ind{J}\in ([n]\times[n])^{s}}\lambda_{\ind{J}}v_{\ind{J}}),\sum_{\ind{J}'\in ([n]\times[n])^{s}}\lambda_{\ind{J}'}v_{\ind{J}'}\rangle,
	\end{align*}
	where in the first inequality we have used that $$\begin{pmatrix}
		y_{(i',j)\ind{J},(i',j')\ind{J}'}
	\end{pmatrix}_{j,j'\in [n],\ind{J},\ind{J}'\in ([n]\times [n])^s}\succeq 0$$ for every $i'\in [n]$, and in the second inequality we have used \cref{eq:bigcontractions} (or \cref{eq:bigcontraction0} if $s=0$). Now we shall prove that $A_s$ are contractions. By their definition, we only have to check that for every $\lambda\in\mathbb{C}^{n^{1+2s}}$ the vector $$\lambda v:=\begin{pmatrix}
	\sum_{\ind{J}\in ([n]\times [n])^s}\lambda_{1\ind{J}}v_{\ind{J}}\\
		\vdots \\
	\sum_{\ind{J}\in ([n]\times [n])^s}\lambda_{n\ind{J}}v_{\ind{J}}
\end{pmatrix}$$
	is mapped through $A_{t-s}$ to a vector of smaller or equal norm. That  is true because 
	\begin{align*}
		\langle A_s\lambda v,A_s\lambda v\rangle &=\sum_{i,j,j'\in [n],\ind{J},\ind{J}'\in ([n]\times [n])^s}\lambda_{j\ind{J}}^*\langle v_{(i,j)\ind{J}},v_{(i,j')\ind{J}'}\rangle\lambda_{j'\ind{J}}\\
		&=\langle \lambda,\sum_{i\in [n]}\begin{pmatrix}y_{(ij)\ind{J},(ij')\ind{J'}}\end{pmatrix}_{j,j'\in [n],\ind{J},\ind{J}'\in ([n]\times [n])^{s}}\lambda\rangle \\
		&\leq \langle\lambda,\begin{pmatrix}
			\begin{pmatrix}
				y_{\ind{J},\ind{J}'}
			\end{pmatrix}_{\ind{J},\ind{J}'\in ([n]\times [n])^{s}} & & \\ & \ddots & \\ & & \begin{pmatrix}
				y_{\ind{J},\ind{J}'}
			\end{pmatrix}_{\ind{J},\ind{J}'\in ([n]\times [n])^{s}}
		\end{pmatrix}\lambda\rangle \\
	&=\langle\lambda v,\lambda v\rangle,
	\end{align*} 
	where in the inequality we have used \cref{eq:bigcontractions} (or \cref{eq:bigcontraction0} in the case of $s=0$).
	
	On the other hand, if \cref{eq:bigcontr2} holds, it is a routine check showing that \cref{eq:bigcontraction0,eq:bigcontractions} hold. Putting everything together, we can rewrite \cref{eq:TcbinSDP} as 
	\begin{align*}
		&\sup && 2\Re(\sum_{\ind{I}\in ([n]\times [n])^t}T_{\ind{I}}R_{\ind{I}}),\\
		&\mathrm{s.t.} &&  R\in (\mathbb C^{n\times n})^{t},\  d\in\mathbb{N},\ u,v\in \mathbb{C}^d, A_s\in M_d\ \mathrm{contractions\ for}\ s\in [n],\\
		& &&\langle u,u\rangle+\langle v,v\rangle \leq 1,\\
		& && R_{\ind{I}}=\langle u,A_1(I_1)\dots A_t(I_t)v\rangle,\ \mathrm{for\ }\ind{I}\in([n]\times [n])^t.
	\end{align*}	
        Note that by multiplying the $v$ of an optimal solution of this optimization program by an appropriate complex phase,  we can express \cref{eq:TcbinSDP} as 

        \begin{align}
		&\sup && 2|\sum_{\ind{I}\in ([n]\times [n])^t}T_{\ind{I}}R_{\ind{I}}|,\label{eq:SDPa}\\
		&\mathrm{s.t.} &&  R\in (\mathbb C^{n\times n})^{t},\  d\in\mathbb{N},\ u,v\in \mathbb{C}^d, A_s\in M_d\ \mathrm{contractions\ for}\ s\in [n],\nonumber\\
		& &&\langle u,u\rangle+\langle v,v\rangle \leq 1,\nonumber\\
		& && R_{\ind{I}}=\langle u,A_1(I_1)\dots A_t(I_t)v\rangle,\ \mathrm{for\ }\ind{I}\in([n]\times [n])^t.\nonumber
	\end{align}	
        We finally claim that the above optimization problem is equivalent to 
        \begin{align}
		&\sup && 2|\sum_{\ind{I}\in ([n]\times [n])^t}T_{\ind{I}}R_{\ind{I}}|,\label{eq:SDPb}\\ \nonumber
		&\mathrm{s.t.} &&  R\in (\mathbb C^{n\times n})^{t},\  d\in\mathbb{N},\ u,v\in \mathbb{C}^d, A_s\in M_d\ \mathrm{contractions\ for}\ s\in [n],\\ \nonumber
		& &&\langle u,u\rangle,\langle v,v\rangle \leq 1/2,\\ \nonumber
		& && R_{\ind{I}}=\langle u,A_1(I_1)\dots A_t(I_t)v\rangle,\ \mathrm{for\ }\ind{I}\in([n]\times [n])^t.
	\end{align}	
	We first note that the value of \cref{eq:SDPa} is greater or equal than the one of \cref{eq:SDPb}, because the feasible region is larger in the case of \cref{eq:SDPa}. On the other hand, if one picks a feasible instance $(u,v,A)$ of \cref{eq:SDPa}, one can define the instance $(\tilde u,\tilde v,A)$ by $$\tilde u=\frac{u\sqrt{\norm{u}^2+\norm{v}^2}}{2\norm{u}},\ \tilde v=\frac{v\sqrt{\norm{u}^2+\norm{v}^2}}{2\norm{v}},$$
 which is feasible for \cref{eq:SDPb} and attains a value greater or equal than $(u,v,A)$, because 
 \begin{align*}
     |\sum T_{\ind I}\langle \tilde u,A_1(I_1)\dots A_t(I_t)\tilde v\rangle|&=\frac{\sqrt{\norm{u}^2+\norm{v}^2}}{2\norm{u}\norm{v}}|\sum T_{\ind I} \langle u,A_1(I_1)\dots A_t(I_t) v\rangle|\\
     &\geq |\sum T_{\ind I}\langle u,A_1(I_1)\dots A_t(I_t) v\rangle|.
 \end{align*}
    Now, the result follows from the fact that the optimal value of  \cref{eq:SDPb} is $\norm{T}_{\cb}$
\end{proof}

\subsection{Strong duality}
Finally, we prove \cref{theo:main} \eqref{item:main3}. Before that, we formally state what is the condition that ensures strong duality.
\begin{theorem}[Slater's theorem]\label{theo:slater}
    Let $(P)$ and $(D)$ be a primal-dual pair of semidefinite programs, as in \cref{eq:SDPDef}. Assume that $(P)$ is feasible and there exists a strictly positive instance for $(D)$, i.e., there exists  $y\in \mathbb{R}^L$ such that $C-\mathcal{B}^*(y)$ is strictly positive. Then the optimal values of $(P)$ and $(D)$ are equal.
\end{theorem}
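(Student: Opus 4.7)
The plan is to proceed by contradiction, combining a finite-dimensional separation theorem with Slater's strict dual feasibility hypothesis to rule out any duality gap. First I would record weak duality: for any primal feasible $Y$ and dual feasible $y$, one has $\langle C, Y\rangle - \langle b, y\rangle = \langle C - \mathcal{B}^*(y), Y\rangle \geq 0$, so that $\alpha_P := \inf(P) \geq \sup(D) =: \alpha_D$. Under the hypotheses both values are finite: a Slater point gives $\alpha_D > -\infty$, weak duality gives $\alpha_P \geq \alpha_D$, and primal feasibility gives $\alpha_P < \infty$, which in turn forces $\alpha_D < \infty$.

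Suppose for contradiction that $\alpha_D < \alpha_P$ and pick $\gamma$ with $\alpha_D < \gamma < \alpha_P$. The main object is the convex cone
\[
K := \{(\mathcal{B}(Y),\, \langle C, Y\rangle + s) : Y \in H_N^+,\ s \geq 0\} \subset \R^L \times \R.
\]
The choice of $\gamma$ immediately forces $(b, \gamma) \notin K$: any $Y \succeq 0$ with $\mathcal{B}(Y) = b$ is primal feasible and hence satisfies $\langle C, Y\rangle \geq \alpha_P > \gamma$, so adding a nonnegative slack only makes the second coordinate exceed $\gamma$.

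The main technical step is to upgrade this to $(b, \gamma) \notin \overline{K}$, so that the finite-dimensional strict separation theorem applies to the closed convex cone $\overline{K}$ and the external point $(b, \gamma)$. This is exactly where Slater's hypothesis enters. If $(b, \gamma) \in \overline{K}$, there exist $Y_n \in H_N^+$ and $s_n \geq 0$ with $\mathcal{B}(Y_n) \to b$ and $\langle C, Y_n\rangle + s_n \to \gamma$. Fixing a Slater point $\hat y$ with $C - \mathcal{B}^*(\hat y) \succeq \eps \Id$ for some $\eps > 0$, the identity $\langle C - \mathcal{B}^*(\hat y), Y_n\rangle = \langle C, Y_n\rangle - \hat y^\top \mathcal{B}(Y_n)$ gives
\[
\eps\,\Tr(Y_n) \leq \langle C - \mathcal{B}^*(\hat y), Y_n\rangle = \langle C, Y_n\rangle - \hat y^\top \mathcal{B}(Y_n),
\]
whose right-hand side is bounded. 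Compactness extracts a subsequence $Y_n \to Y^\star \succeq 0$ with $\mathcal{B}(Y^\star) = b$ and $\langle C, Y^\star\rangle \leq \gamma$, contradicting $\gamma < \alpha_P$. Hence $(b, \gamma) \notin \overline{K}$, and strict separation produces $(y, \mu) \in \R^L \times \R$ such that $y^\top w + \mu t \geq 0$ for every $(w, t) \in K$ and $y^\top b + \mu \gamma < 0$. Probing with $(0, s)$, $s \geq 0$, yields $\mu \geq 0$, and probing with $(\mathcal{B}(Y), \langle C, Y\rangle)$ for all $Y \succeq 0$ yields $\mathcal{B}^*(y) + \mu C \succeq 0$.

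A short case analysis in $\mu$ then closes the argument. If $\mu > 0$, the vector $\tilde y := -y/\mu$ is dual feasible and satisfies $\langle b, \tilde y\rangle = -b^\top y/\mu > \gamma > \alpha_D$, contradicting the definition of $\alpha_D$. If $\mu = 0$, one has $\mathcal{B}^*(y) \succeq 0$ and $b^\top y < 0$, so for every $t \geq 0$ the vector $\hat y - ty$ is dual feasible because $C - \mathcal{B}^*(\hat y - ty) = (C - \mathcal{B}^*(\hat y)) + t \mathcal{B}^*(y) \succeq \eps \Id$, while its objective value $b^\top \hat y - t\, b^\top y$ tends to $+\infty$ as $t \to \infty$, contradicting $\alpha_D < \infty$. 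I expect the closedness step in the third paragraph to be the main obstacle, since it is the only place where Slater's strict feasibility is used nontrivially to rule out a degenerate separator; the surrounding Lagrangian bookkeeping is then essentially automatic.
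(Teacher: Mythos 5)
The paper does not prove this theorem at all: it is imported as a black box, with the reader pointed to Watrous's lecture notes, so there is no internal argument to compare yours against. Judged on its own terms, your proof is correct and complete, and it is exactly the kind of argument the paper alludes to when it says Slater's theorem ``just requires a finite-dimensional separation theorem'': weak duality, the image cone $K=\{(\mathcal{B}(Y),\langle C,Y\rangle+s):Y\in H_N^+,\,s\ge 0\}$, the use of the strictly feasible dual point $\hat y$ to force $\eps\Tr(Y_n)$ bounded and hence $(b,\gamma)\notin\overline{K}$, strict separation from the closed convex cone, and the case split on the multiplier $\mu$. All the individual steps check out: the bound $\langle C,Y_n\rangle\le\langle C,Y_n\rangle+s_n\to\gamma$ together with $\mathcal{B}(Y_n)\to b$ does bound the right-hand side of your trace inequality from above; boundedness of the trace of a positive semidefinite matrix does give norm-compactness; the separator of a point from a closed convex cone can indeed be normalized to be nonnegative on the cone and negative at the point; and the two cases $\mu>0$ and $\mu=0$ correctly produce, respectively, a dual feasible point beating $\gamma$ and an unbounded dual ray, both contradictions. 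The only cosmetic remark is that in the $\mu=0$ case the contradiction is most cleanly phrased against $\alpha_D\le\alpha_P<\infty$, which you have already established; you say essentially this. Nothing is missing.
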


\begin{proof}[ of \cref{theo:CS}  \eqref{item:main3}]
	By adding slack variables one can write \cref{eq:TcbinSDP,eq:CSinSDP} in their canonical form as in \cref{eq:SDPDef}. Then, it is (tedious and) routine to check that $(D_{CS})$ is the dual of $(P_{CS})$. Also, the conditions of \cref{theo:slater} are satisfied, because $(P_{CS})$ is feasible, as every $T$ factors as in \cref{eq:CSfactorization} for some $u,v$ with sufficiently big norm, and the following parameters define a strictly positive feasible instance\footnote{Which in the non-canonical form that $(D_{CS})$ is expressed means that all inequalities are strict.} for $(D_{CS})$ 
	\begin{align*}
		y_0=y_0'&=\frac{1}{3},\\
		y_{\ind I,\ind J}&=\frac{\delta_{\ind I,\ind J}}{3(n+1)^s},\ \mathrm{for\ }\ind I,\ind J\in ([n]\otimes [n])^{s},\ s\in [t],\\
            y_{0,\ind I}=y_{\ind I,0}&=0, \mathrm{for\ }\ind I\in ([n]\otimes [n])^{s},\ s\in [t].
	\end{align*}
\end{proof}

\noindent \textbf{Acknowledgements.} We thank Jop Bri\"et, Sander Gribling, Harold Nieuwboer, Carlos Palazuelos and Vern Paulsen for useful comments and discussions. 
\bibliographystyle{alphaurl}
\bibliography{Bibliography}

\begin{thebibliography}{NRV13}

\bibitem[ABP19]{QQA=CBF}
Srinivasan Arunachalam, Jop Bri{\"{e}}t, and Carlos Palazuelos.
\newblock Quantum query algorithms are completely bounded forms.
\newblock {\em SIAM J.\ Comput}, 48(3):903--925, 2019.
\newblock Preliminary version in ITCS'18.

\bibitem[BP91]{BLECHER1991262}
David~P Blecher and Vern~I Paulsen.
\newblock Tensor products of operator spaces.
\newblock {\em Journal of Functional Analysis}, 99(2):262--292, 1991.
\newblock URL:
  \url{https://www.sciencedirect.com/science/article/pii/0022123691900424},
  \href {https://doi.org/10.1016/0022-1236(91)90042-4}
  {\path{doi:10.1016/0022-1236(91)90042-4}}.

\bibitem[CS87]{christensen1987representations}
Erik Christensen and Allan~M Sinclair.
\newblock Representations of completely bounded multilinear operators.
\newblock {\em Journal of Functional analysis}, 72(1):151--181, 1987.

\bibitem[GL19]{gribling2019semidefinite}
Sander Gribling and Monique Laurent.
\newblock Semidefinite programming formulations for the completely bounded norm
  of a tensor.
\newblock {\em arXiv preprint arXiv:1901.04921}, 2019.

\bibitem[LR05]{LAURENT2005393}
Monique Laurent and Franz Rendl.
\newblock Semidefinite programming and integer programming.
\newblock In K.~Aardal, G.L. Nemhauser, and R.~Weismantel, editors, {\em
  Discrete Optimization}, volume~12 of {\em Handbooks in Operations Research
  and Management Science}, pages 393--514. Elsevier, 2005.
\newblock URL:
  \url{https://www.sciencedirect.com/science/article/pii/S0927050705120088},
  \href {https://doi.org/10.1016/S0927-0507(05)12008-8}
  {\path{doi:10.1016/S0927-0507(05)12008-8}}.

\bibitem[NRV13]{naor2013efficient}
Assaf Naor, Oded Regev, and Thomas Vidick.
\newblock Efficient rounding for the noncommutative grothendieck inequality.
\newblock In {\em Proceedings of the forty-fifth annual ACM symposium on Theory
  of computing}, pages 71--80, 2013.

\bibitem[PS87]{PAULSEN1987258}
V.I Paulsen and R.R Smith.
\newblock Multilinear maps and tensor norms on operator systems.
\newblock {\em Journal of Functional Analysis}, 73(2):258--276, 1987.
\newblock URL:
  \url{https://www.sciencedirect.com/science/article/pii/0022123687900681},
  \href {https://doi.org/10.1016/0022-1236(87)90068-1}
  {\path{doi:10.1016/0022-1236(87)90068-1}}.

\bibitem[RV14]{regev2014elementary}
Oded Regev and Thomas Vidick.
\newblock Elementary proofs of grothendieck theorems for completely bounded
  norms.
\newblock {\em Journal of operator theory}, pages 491--505, 2014.

\bibitem[RV15]{regev2015quantum}
Oded Regev and Thomas Vidick.
\newblock Quantum xor games.
\newblock {\em ACM Transactions on Computation Theory (ToCT)}, 7(4):1--43,
  2015.

\bibitem[Wat09]{watrous2009semidefinite}
John Watrous.
\newblock Semidefinite programs for completely bounded norms.
\newblock {\em arXiv preprint arXiv:0901.4709}, 2009.

\bibitem[Wat12]{watrous2012simpler}
John Watrous.
\newblock Simpler semidefinite programs for completely bounded norms.
\newblock {\em arXiv preprint arXiv:1207.5726}, 2012.

\bibitem[Wat21]{watrouslecturenotes}
John Watrous.
\newblock Conic programming (lecture notes).
\newblock 2021.
\newblock URL:
  \url{https://johnwatrous.com/wp-content/uploads/2023/08/QIT-notes.01.pdf}.

\end{thebibliography}
\end{document}